\font\smallit=cmti10
\font\smalltt=cmtt10
\renewcommand\section{\@startsection {section}{1}{\z@}
{-30pt \@plus -1ex \@minus -.2ex}
{2.3ex \@plus.2ex}
{\normalfont\normalsize\bfseries\boldmath}}
\renewcommand\subsection{\@startsection{subsection}{2}{\z@}
{-3.25ex\@plus -1ex \@minus -.2ex}
{1.5ex \@plus .2ex}
{\normalfont\normalsize\bfseries\boldmath}}
\renewcommand{\@seccntformat}[1]{\csname the#1\endcsname. }
\newtheorem{theorem}{Theorem}
\newtheorem{lemma}{Lemma}
\newtheorem{rek}{Remark}
\newtheorem{proposition}{Proposition}
\begin{document}

\begin{center}
\uppercase{\bf Sets of Cardinality 6 Are Not Sum-dominant}
\vskip 20pt
{\bf H\`ung Vi\d{\^e}t Chu}\\
{\smallit Department of Mathematics, University of Illinois at Urbana-Champaign, Illinois 61820, USA}\\
{\tt hungchu2@illinois.edu}\\ 
\end{center}
\vskip 20pt
\centerline{\smallit Received: , Revised: , Accepted: , Published: } 
\vskip 30pt

\centerline{\bf Abstract}
\noindent
Given a finite set $A\subseteq \mathbb{N}$, define the sum set
$$A+A = \{a_i+a_j\mid a_i,a_j\in A\}$$ and the difference set 
$$A-A = \{a_i-a_j\mid a_i,a_j\in A\}.$$ The set $A$ is said to be sum-dominant
if $|A+A|>|A-A|$. Hegarty used a nontrivial algorithm to find that $8$ is the smallest cardinality of a sum-dominant set. Since then, Nathanson has asked for a human-understandable proof of the result. However, due to the complexity of the interactions among numbers, it is still questionable whether such a proof can be written down in full without computers' help. In this paper, we present a computer-free proof that a sum-dominant set must have at least $7$ elements. We also answer the question raised by the author of the current paper et al about the smallest sum-dominant set of primes, in terms of its largest element. Using computers, we find that the smallest sum-dominant set of primes has $73$ as its maximum, smaller than the value found before.
\pagestyle{myheadings}
\markright{\smalltt INTEGERS: 19 (2019)\hfill}
\thispagestyle{empty}
\baselineskip=12.875pt
\vskip 30pt

\section{Introduction}
\subsection{Background}
Given a finite set $A\subseteq \mathbb{N}$, define $A+A = \{a_i + a_j\,|\, a_i, a_j\in A\}$ and $A-A = \{a_i - a_j \,|\, a_i, a_j\in A\}$. The set $A$ is said to be 
\begin{itemize}
    \item \textit{sum-dominant}, if $|A+A|>|A-A|$;
    \item \textit{balanced}, if $|A+A| = |A-A|$; and
    \item \textit{difference-dominant}, if $|A+A|<|A-A|$.
\end{itemize}
Because addition is commutative, while subtraction is not, sum-dominant sets are very rare. However, it was first proved by Martin and O'Bryant \cite{MO} that as $n\rightarrow 
\infty$, the proportion of sum-dominant subsets of $\{0,1,2,\ldots,n-1\}$ is bounded below by a positive constant (about $2\cdot 10^{-7}$), which was later improved by Zhao \cite{Zh3} to about $4\cdot 10^{-4}$. The last few years have seen an explosion of papers examining the properties of sum-dominant sets: see \cite{FP, Ma, Na2, Ru1, Ru2, Ru3} for
history and overview, \cite{He,MOS,MS,Na3,Zh1} for explicit constructions, \cite{CLMS2, MO, Zh3} for positive lower bounds for the percentage of
sum-dominant sets, \cite{ILMZ,MPR} for generalized sum-dominant sets,
and \cite{AMMS,CLMS1,CNMXZ,MV,Zh2} for extensions to other settings. 

In response to Nathanson's question of the smallest sum-dominant set \cite{Na2}, Hegarty \cite{He} used a clever algorithm to find that a sum-dominant set must have at least $8$ elements. (The computer program was reported to run for about $15$ hours.) However, a human-understandable proof of the result has not been produced because of the complexity lurking behind the interactions of numbers in addition and subtraction. Nathanson \cite{Na1, Na4} asked for a human-understandable proof of the smallest cardinality of a sum-dominant set. Hegarty, through personal communication, also said that it would be nice to have such a proof written down in full. This paper proves that a set of cardinality $6$ is not sum-dominant without the use of computers. In combination with \cite[Theorem 1]{Chu}, we have a computer-free proof that a sum-dominant set must have at least $7$ elements. 

\subsection{Notation}
We introduce some notation.
\begin{itemize}
    \item Let $A$ and $B$ be sets. We write $A\rightarrow B$ to mean the introduction of elements in $A$ to $B$. For example, $\{2\}\rightarrow \{4,9,12\}$ means that we introduce the number $2$ into the set $\{4,9,12\}$.
    \item We write $a_n + \cdots +a_m$ for some $n\le m$ to mean the sum $a_n+a_{n+1}+\cdots+a_{m-1}+a_{m}$.
    \item We use a different notation to write a set, which was first introduced by Spohn \cite{Sp}. Given a set $S = \{m_1, m_2, \ldots, m_n\}$, we arrange its elements in increasing order and find the differences between two consecutive numbers to form a sequence. Suppose that $m_1 < m_2 < \cdots < m_n$, then our sequence is $m_2 - m_1, m_3 - m_2, m_4 -
m_3, \ldots , m_n - m_{n-1}$, and we represent $S = (m_1\,|\,m_2 - m_1, m_3 - m_2, m_4 - m_3, \ldots , m_n - m_{n-1}) = (m_1|a_1,\ldots,a_{n-1})$, where $a_i = m_{i+1}-m_i$. Finally, any difference in $S-S$ must be equal to at least a sum $a_i+\cdots+a_j$ for some $1\le i\le j\le n-1$. Take $S = \{3, 2, 15, 10, 9\}$, for example. We arrange the elements in increasing order to have $2$, $3$, $9$, $10$, $15$, form a sequence by looking at the difference between two consecutive numbers: $1$, $6$, $1$, $5$, and write $S = (2\,|\,1, 6, 1, 5)$. All information about a set is preserved in this notation. 
\end{itemize}
\subsection{Main results}
\begin{theorem}\label{maintheo}
A set of cardinality $6$ is not sum-dominant. 
\end{theorem}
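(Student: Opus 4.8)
The plan is to reduce the statement to a finite check over a short, explicit list of highly structured sets. Since translation changes neither $|A+A|$ nor $|A-A|$, I would normalise $\min A = 0$ and write $A = (0\,|\,a_1,a_2,a_3,a_4,a_5)$ in the Spohn notation with all $a_i \ge 1$; dilation and the reflection $x\mapsto -x$ also preserve both cardinalities, so I would further assume $\gcd(a_1,\dots,a_5)=1$ and that $(a_1,\dots,a_5)$ is lexicographically at most its reverse $(a_5,\dots,a_1)$. As recorded in the excerpt, every nonzero element of $A-A$ equals $\pm(a_i+\cdots+a_j)$ for some $1\le i\le j\le 5$, so $|A-A| = 1+2|D|$, where $D=\{a_i+\cdots+a_j : 1\le i\le j\le 5\}$ is the set of the (at most $15$) interval sums; hence it suffices to prove $|A+A|\le 2|D|+1$, and since $|A+A|\le\binom{7}{2}=21$ this is immediate whenever $|D|\ge 10$.

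To go much further I would count coincidences, arguing by contradiction that $A$ is sum-dominant. If $b_i+b_j=b_k+b_l$ is a coincidence among the $21$ pairwise sums of $A=\{b_0<\cdots<b_5\}$, then rearranging it produces at most two coincidences among the interval sums; so, writing $C_+$ and $C_D$ for the numbers of coinciding pairs of pairwise sums, respectively of interval sums, one has $C_D\le 2C_+$ --- indeed, tracking the degenerate rearrangements, $C_D = 2C_+ - T$, where $T$ is the number of $3$-term arithmetic progressions contained in $A$. Combined with $15-|D|\le C_D$ and $|A-A|=2|D|+1$, this kills every large value of $|A+A|$: if $|A+A|=21$ then $A$ is a Sidon set and $|A-A|=31$; if $|A+A|=20$ then $C_+=1$, forcing $|D|\ge 13$; if $|A+A|=19$ then $C_+\le 3$, forcing $|D|\ge 9$; and if $|A+A|=18$ then either $C_+=3$ and $|D|\ge 9$, or some sum value has three representations, which forces $A$ to contain a set symmetric about a point of size $5$ or $6$ (size $6$ making $A$ balanced), hence to contain at least two $3$-term progressions, so $C_D\le 6$ and $|D|\ge 9$ once more. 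In each case $|A-A|\ge|A+A|$, a contradiction, so $|A+A|\le 17$, and then $2|D|+1=|A-A|<|A+A|\le 17$ forces $|D|\le 7$ --- the interval-sum ``pyramid'' now carries at least $8$ coincidences.

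The remaining work is a classification. After cancelling any common middle block, each relation $a_i+\cdots+a_j=a_k+\cdots+a_l$ reduces to an equality of two \emph{disjoint} interval sums among the five positive integers $a_1,\dots,a_5$ (the case $a_i=a_j$ being the length-one instance), so ``at least $8$ coincidences among the $15$ interval sums'' translates into a concrete system of disjoint-interval equalities; I would enumerate all of its solutions up to the dilation/reflection normalisation. The outcome is a short catalogue of shapes --- arithmetic progressions, the pattern arising from $\{0,1\}+\{0,d,2d\}$, and a handful of near-progressions such as $\{0,1,2,3,5,6\}$ --- and for each one computes $|A+A|$ directly and checks $|A+A|\le 2|D|+1$, with equality for the progressions, for $\{0,1,10,11,20,21\}$, for $\{0,1,2,3,5,6\}$, and their dilates/reflections (reflecting that the true threshold is $8$ rather than $7$, but only by a hair). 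The main obstacle is precisely this classification: proving by hand, with no case omitted, that five positive integers cannot support eight disjoint-interval coincidences except in the catalogued ways is intricate bookkeeping, and preventing the subcases from proliferating --- even after the symmetry reduction and the pruning to $|D|\le 7$ --- is where essentially all of the effort lies.
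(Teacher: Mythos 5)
Your reduction step is sound and is essentially the paper's own: the identity $|A-A|=1+2|D|$ for the set $D$ of interval sums, the inequality $C_D\le 2C_+$ relating coincidences of interval sums to coincidences of pairwise sums (this is exactly the content of Observation 13 of \cite{Chu} that the paper invokes), and the refinement by $3$-term progressions reproduce Lemmas \ref{atmost7} and \ref{atmost8} (your case analysis on $|A+A|\in\{18,19,20,21\}$ even sharpens the bound from $|D|\le 8$ to $|D|\le 7$, modulo some hand-waving in the triple-representation subcase of $|A+A|=18$). The problem is that everything after that --- the actual proof --- is missing. You reduce the theorem to the claim that five positive integers $a_1,\dots,a_5$ admitting at least $8$ disjoint-interval coincidences fall into a ``short catalogue of shapes,'' assert what that catalogue looks like, and then concede that establishing it ``with no case omitted'' is ``where essentially all of the effort lies.'' That classification is not a routine appendix: it is Sections \ref{12}, \ref{1not2}, and \ref{appen} of the paper, i.e.\ the entire substance of the result, and nothing in your proposal indicates how to organize it so that the cases do not explode.

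Moreover, the catalogue you predict is not of the right shape. The terminal objects of the paper's case analysis are not a handful of specific sets up to dilation and reflection, but fifteen \emph{two-parameter families} $S_1,\dots,S_{15}$ (e.g.\ $S_5=(0\,|\,a,b,b,a,a)$, $S_{10}=(0\,|\,a+b,2a+b,a+b,a,b)$), together with symmetric sets and sets containing a $4$-term arithmetic progression; and each family then requires its own argument (Section \ref{appen}) showing it is not sum-dominant, which is itself a further case analysis on the relative sizes of the parameters. A set such as $(0\,|\,1,2,1,1,2)$ already has $|D|=7$ without being an arithmetic progression, a translate of $\{0,1\}+\{0,d,2d\}$, or a dilate/reflection of $\{0,1,2,3,5,6\}$, so the enumeration you sketch would, if carried out honestly, produce something considerably larger and messier than you anticipate. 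As it stands the proposal identifies the correct first lemma and then stops where the theorem begins.
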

\begin{rek}\normalfont Combined with \cite[Theorem 1]{Chu}, Theorem \ref{maintheo} says that a sum-dominant set must have at least $7$ elements. This is one step closer to the result of Hegarty; that is, a sum-dominant set must have at least $8$ elements. \end{rek}
The interactions of $6$ numbers to form the sum set and the difference set are so complicated that we need a clever division of the problem into cases and reduce the complexity considerably. We believe that to prove this theorem, case analysis is inevitable. Therefore, the question is whether the proof can be written down in full without being too overwhelming. Our main technique is to argue for a lower bound for the number of pairs of equal positive differences from $A-A$, which confines set $A$ to certain structures. The lower bound in turn gives an upper bound for the number of distinct positive differences given by numbers in $A$. 

For simplicity of notation, we denote our set $A= (0\,|\, a_1, a_2, a_3, a_4, a_5)$ for $a_i\in \mathbb{N}_{\ge 1}$. In proving that $A$ is not sum-dominant, we split our proof into two sections considering whether $a_1 = a_2$ or $a_1 \neq a_2$. In particular, Section \ref{tools} provides tools to eliminate or simplify cases in our proof as well as restricts $A$ to certain structures; Section \ref{12} and Section \ref{1not2} consider the two cases $a_1 = a_2$ and $a_1 \neq a_2$, respectively; Section \ref{appen} proves one of our lemmas; Section \ref{prime} investigates sum-dominant sets of primes; finally, Section \ref{future} mentions some open problems for future research. 

Our next result is to find the smallest sum-dominant set of primes, in terms of its largest element. The Green-Tao theorem states that the primes contain arbitrarily long arithmetic progressions. Chu et al. \cite{CNMXZ} used this theorem to prove that there are infinitely many sum-dominant set of primes. However, sum-dominant sets of primes are expected to appear much earlier before we see a long arithmetic progression. For example, the authors found $\{19, 79, 109, 139, 229, 349, 379, 439\}$ as a sum-dominant set of primes \cite{CNMXZ}. The following theorem answers their question about the smallest sum-dominant set of primes, in terms of its largest element; equivalently, about how early in the prime sequence, we see a sum-dominant set. 
\begin{theorem}
The smallest sum-dominant set of primes, in terms of its largest element, is $\{3, 5, 7, 13, 17, 19, 23, 43, 47, 53, 59, 61, 67, 71, 73\}$. This set is also unique in the sense that there is no other sum-dominant set with $73$ as its largest element.
\end{theorem}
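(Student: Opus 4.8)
The plan is to establish the two assertions --- that $73$ is the smallest possible largest element of a sum-dominant set of primes, and that the displayed set is the only such set with largest element $73$ --- by a single exhaustive search over subsets of the primes up to $73$, after separating out the one fact that is checked directly, namely that the displayed set really is sum-dominant. As the introduction acknowledges, and in contrast to Theorem~\ref{maintheo}, this argument relies on a computer; I will be explicit about what the machine verifies.

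First I would verify the positive direction. For $A=\{3,5,7,13,17,19,23,43,47,53,59,61,67,71,73\}$ one forms $A+A\subseteq\{6,8,\dots,146\}$ and $A-A\subseteq\{-70,\dots,70\}$, counts distinct values, and checks $|A+A|>|A-A|$. Since $A-A$ is symmetric about $0$ and contains $0$, it suffices to count the positive differences $a_i-a_j$ and the sums $a_i+a_j$ among the $15$ elements of $A$; this is a finite tabulation, carried out in practice alongside the main search.

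For the remaining assertions, let $P=\{2,3,5,7,11,13,17,19,23,29,31,37,41,43,47,53,59,61,67,71,73\}$ be the $21$ primes at most $73$. I would enumerate the subsets of $P$, using Theorem~\ref{maintheo} together with \cite[Theorem~1]{Chu} to discard those with fewer than $7$ elements (the stronger bound of \cite{He} would let us discard those with fewer than $8$), and for each surviving $B$ compute $|B+B|$ and $|B-B|$, recording the sum-dominant ones. The claimed output is that the displayed set is the only sum-dominant subset of $P$; in particular no sum-dominant subset of $P$ avoids $73$. The first statement yields uniqueness among sets with largest element $73$, and the second yields minimality: a sum-dominant set of primes with largest element $p<73$ would be a subset of $P\setminus\{73\}$, and the search finds no sum-dominant subset of $P\setminus\{73\}$. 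Observe that without the primality hypothesis the assertion would be false, since sum-dominance is translation invariant and any sum-dominant set can be shifted to have largest element $73$; it is precisely the coupling of primality to absolute position that makes the statement nontrivial and that forces the search to range over subsets of $P$ rather than over the ``shapes'' $(m_1\mid a_1,\dots)$ used in the earlier sections.

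The obstacle here is not the size of the computation --- there are $2^{21}\approx 2\times 10^{6}$ subsets of $P$, each sumset and difference-set count is cheap, and the cardinality bound removes the smallest subsets, so the search finishes quickly --- but rather the reliance on a computer at all. It would be preferable to replace the enumeration by a structural argument in the spirit of Section~\ref{tools}: a sum-dominant set has $|A-A|$ small relative to $|A+A|$, which for a set of primes confined to $[2,73]$ sharply restricts how the residues of its elements may be distributed, and one could hope to eliminate all but the listed configuration by hand. I record this as an open direction rather than pursue it here.
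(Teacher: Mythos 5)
Your proposal is correct and takes essentially the same approach as the paper: an exhaustive computer search over subsets of the small primes, with the displayed set verified directly. The paper differs only in implementation details --- it searches all primes up to $109$ (finding $2725$ sum-dominant sets, of which the displayed one is the unique minimum) and excludes $2$ by a short argument rather than including it in the enumeration as you do.
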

Lastly, we also have an observation about the minimum number of elements added to an arithmetic progression to have a sum-dominant set. 
\begin{rek}\normalfont
Let $c$ be the smallest number of elements to be added to an arithmetic progression to form a sum-dominant set. Then $3\le c\le 4$. This is due to two previous works. The author of the current paper proved that adding two arbitrary numbers into an arithmetic progression does not give a sum-dominant set \cite{Chu}. So, $3\le c$. It is also known that $A^* = \{0,2\}\cup \{3,7,11,\ldots,4k-1\}\cup\{4k,4k+2\}$ is sum-dominant \cite{Na3}. Another example is the set $\{0,1,3\}\cup \{7,8,\ldots,17\}\cup \{24\}$. Hence, $c\le 4$.
\end{rek}

\section{Important results}\label{tools}
In this section, we provide all necessary tools that help reduce the complexity of the problem considerably. 
We use the definition of a symmetric set given by Nathanson \cite{Na3}: a set $A$ is symmetric if there exists a number $a$ such that $a-A = A$. If so, we say that the set $A$ is symmetric about $a$. The following proposition was proved by Nathanson \cite{Na3}. 
\begin{proposition}
A symmetric set is balanced. 
\end{proposition}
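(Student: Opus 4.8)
The plan is to prove the proposition by exhibiting an explicit bijection between $A+A$ and $A-A$ built from the symmetry involution. Suppose $A$ is symmetric about $a$, so that $a-A=A$ as sets; equivalently, the map $\iota\colon x\mapsto a-x$ carries $A$ onto itself. Define $\phi\colon A+A\to \mathbb{Z}$ by $\phi(s)=s-a$, i.e. translation by $-a$.

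The key steps are then routine. First I would check that $\phi$ maps $A+A$ into $A-A$: if $s=x+y$ with $x,y\in A$, then $a-y\in A$ by symmetry, and $\phi(s)=x+y-a=x-(a-y)\in A-A$. Injectivity of $\phi$ is immediate since a translation is injective. For surjectivity onto $A-A$, take any $d\in A-A$, say $d=x-y$ with $x,y\in A$; then $a-y\in A$, so $d+a=x+(a-y)\in A+A$ and $\phi(d+a)=d$. Hence $\phi$ restricts to a bijection $A+A\to A-A$, which gives $|A+A|=|A-A|$, i.e. $A$ is balanced. Equivalently and more compactly, one can simply observe the set identity $(A+A)-a=A+(A-a)=A+(-A)=A-A$, where $A-a=-A$ is exactly the symmetry hypothesis rewritten; since translating a finite set by a constant does not change its cardinality, the conclusion follows at once.

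There is no substantial obstacle here; the single point worth stressing is that \emph{both} the well-definedness of $\phi$ and its surjectivity use the full strength of the hypothesis $a-A=A$, not merely an inclusion $a-A\subseteq A$, since at each stage we need $a-y$ to be a genuine element of $A$ whenever $y\in A$. Because the statement and proof are short and are due to Nathanson~\cite{Na3}, I would present only this one-line bijection (or the set-identity version) and move on, as the proposition is used in the sequel purely as a black box to discard symmetric configurations of $A$.
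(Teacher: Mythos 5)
Your proof is correct and is essentially the paper's own argument: the paper writes the one-line identity $|A+A| = |A+(a-A)| = |a+(A-A)| = |A-A|$, which is exactly your compact set-identity version $(A+A)-a = A+(A-a) = A-A$, and your explicit bijection $\phi(s)=s-a$ is just an elementwise unpacking of that same translation. No substantive difference.
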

\begin{proof}
Let $A$ be a symmetric set about $a$. We have $|A+A| = |A+(a-A)| = |a+(A-A)| = |A-A|$. Hence, $A$ is balanced. 
\end{proof}
Though symmetric sets are not sum-dominant, adding a few numbers into these sets (in a clever way) can produce sum-dominant sets. Examples of such a technique were provided by Hegarty \cite{He} and Nathanson \cite{Na3}. Note that a set of numbers from an arithmetic progression is symmetric about the sum of the maximum and the minimum of the arithmetic progression. For example, the set $E= \{3,5,7,9,11\}$ is symmetric about $14$. Next, we prove a very useful lemma that establishes an upper bound for the number of distinct positive differences in $A-A$. 
\begin{lemma}\label{atmost7}
Let $A$ be a sum-dominant set with $|A| = 6$. If there exist $m_1$, $m_2$, and $m_3\in A$ such that $m_2-m_1 = m_3 - m_2$, then $A$ has at most $7$ distinct positive differences. 
\end{lemma}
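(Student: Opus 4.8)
The plan is to translate $A$ so that $0\in A$ and write $A=\{0=x_1<x_2<\cdots<x_6\}$; let $d$ be the number of distinct positive differences, so that $|A-A|=2d+1$ and $|A+A|\le\binom{7}{2}=21$. For a value $v$ put $s(v)=\#\{(i,j):i\le j,\ x_i+x_j=v\}$ and for $g>0$ put $r(g)=\#\{(i,j):i<j,\ x_j-x_i=g\}$, and set
$$C_+=\sum_v\big(s(v)-1\big),\quad P_+=\sum_v\binom{s(v)}{2},\quad C_-=\sum_{g>0}\big(r(g)-1\big),\quad P_-=\sum_{g>0}\binom{r(g)}{2},$$
so that $|A+A|=21-C_+$ and $d=15-C_-$. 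The engine of the argument is the identity
$$P_-\;=\;6+2P_+-\sum_{a\in A}s(2a),$$
which I would prove by computing the additive energy $\#\{(a,b,c,e)\in A^4:a+b=c+e\}$ in two ways: directly it equals $\sum_v\sigma(v)^2$, where $\sigma(v)$ counts \emph{ordered} sum-representations of $v$; rewriting the relation as $a-c=e-b$ it equals $\sum_g\tau(g)^2$, where $\tau(g)$ counts ordered representations of $g$ in $A-A$. Expanding $\sigma(v)=2s(v)-\mathbf{1}[v/2\in A]$, using $\sum_v s(v)=21$ and $\sum_{g>0}r(g)=15$, and writing $\tau$ in terms of $r$, the two expressions combine to the identity.

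Next I would feed in sum-dominance. From $|A+A|\ge|A-A|+1=2d+2$ we get $C_+\le 19-2d$; since $s(v)\le 3$ for every $v$ in a $6$-element set, this bounds $P_+$ (namely $P_+\le 1$ if $d=9$ and $P_+\le 4$ if $d=8$). Substituting into the identity together with the trivial facts $s(2a)\ge 1$ (so $\sum_{a\in A}s(2a)\ge 6$) and $P_-\ge C_-$ forces, in quick succession, that $d=9$ is impossible and that $d=8$ can occur only in one of two completely rigid configurations. This is where the hypothesis enters: the progression $m_1,m_2,m_3$ gives $2m_2=m_1+m_3$ with $m_1\ne m_3$, hence $s(2m_2)\ge 2$ and $\sum_{a\in A}s(2a)\ge 7$; this rules out one of the two configurations and leaves only the following --- $|A+A|=18$ with one sum attained three times and the sum $2m_2$ attained exactly twice (every other sum once), and $|A-A|=17$ with exactly seven positive differences attained twice and one attained once.

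It remains to rule out this surviving rigid $d=8$ configuration directly, and I expect this to be the main obstacle. Here one would translate the multiplicity data back into the gap vector $(a_1,\dots,a_5)$ of $A=(0\,|\,a_1,\dots,a_5)$: the requirement that seven of the fifteen differences $x_j-x_i$ coincide in pairs while exactly one sum is attained thrice pins the $a_i$ down to a short list of shapes, and one checks that each of them violates $|A+A|>|A-A|$. This last step is the only place where case analysis on the $a_i$ seems unavoidable; everything before it is forced once the energy identity is in hand. (The statement is ultimately vacuous, since Theorem~\ref{maintheo} shows no sum-dominant $6$-set exists; the lemma serves to organize that proof.)
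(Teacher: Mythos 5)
Your framework is sound and its engine checks out: the identity $P_-=6+2P_+-\sum_{a\in A}s(2a)$ is correct (it is the additive-energy computation $\sum_v\sigma(v)^2=\sum_g\tau(g)^2$ written out for $|A|=6$), the bound $C_+\le 19-2d$ from sum-dominance is right, $s(v)\le 3$ holds because each element of $A$ occurs in at most one representation of $v$, and your elimination of $d=9$ and reduction of $d=8$ to the rigid configuration ($P_+=4$ forced, hence one sum of multiplicity $3$ and one of multiplicity $2$, the latter being $2m_2$; $P_-=C_-=7$) all verify. This is a genuinely different route from the paper's, which instead invokes \cite[Observation 13]{Chu} --- each pair of equal positive differences costs at least half a sum-coincidence, while the pair coming from the $3$-term progression costs a full one --- and concludes directly that there are at least $8$ pairs, hence $|A-A|\le 15$, with no residual configuration to dispose of. Your exact identity is sharper and self-contained, but it leaves a remainder.

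That remainder is the gap: you explicitly defer the elimination of the surviving $d=8$ configuration to an unexecuted case analysis on the gap vector, calling it ``the main obstacle,'' so as written the proof stops short of the statement. The obstacle in fact dissolves in one line, and your own bookkeeping already contains the key fact: the value $w$ with $s(w)=3$ satisfies $w\ne 2a$ for all $a\in A$ (else $\sum_a s(2a)\ge 8$), so its three representations are three pairs partitioning $A$; ordering forces the minimum to pair with the maximum and then forces the pairing $\{x_1,x_6\},\{x_2,x_5\},\{x_3,x_4\}$, whence $w-A=A$. Thus $A$ is symmetric, hence balanced by the paper's Proposition 1, contradicting sum-dominance. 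With that observation inserted your argument becomes a complete --- and arguably cleaner --- proof of the lemma; without it, the case analysis you gesture at is genuinely missing.
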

\begin{proof}
Let $x$ be the number of pairs of equal positive differences given by the interaction of numbers in $A$ when we take $A-A$. For example, in our set $E$ above, $11-7 = 9-5$. So, $(11,7)$ and $(9,5)$ form a pair of equal positive differences. We need the following two inequalities
\begin{align}
    |A+A| &\ \le \ |A|(|A|+1)/2,\label{boundsum}\\
    |A-A| &\ \le \ |A|(|A|-1)+1. \label{boundiff}
\end{align}
These inequalities are not hard to prove and were used by Hegarty \cite{He} and the author of the current paper \cite{Chu}. Inequality (\ref{boundsum}) gives $|A+A| \le 21$, while Inequality (\ref{boundiff}) gives $|A-A|\le 31$. The equality in (\ref{boundsum}) is achieved if the sum of any two numbers is distinct, and the equality in (\ref{boundiff}) is achieved if the difference between any two different numbers is distinct. Because we have $x$ pairs of equal positive differences, we have $|A-A| = 31-2x$ (taking into account equal negative differences). We find a lower bound for $|A+A|$ by using \cite[Observation 13]{Chu}. Because $m_2 - m_1 = m_3 - m_2$, we have the pair of equal positive differences: $(m_2, m_1)$ and $(m_3,m_2)$. According to \cite[Observation 13]{Chu}, this pair does not give another new pair of equal positive differences. So, the existence of this pair reduces the maximum number of differences in $A-A$ by exactly $2$ while reduces the maximum number of sums in $A+A$ by exactly $1$. The rest $x-1$ pairs reduces the maximum number of differences by $2(x-1)$ while reduces the maximum number of sums by at least $(x-1)/2$. Therefore, $|A+A|\le 21 - 1 - (x-1)/2 = 20 -(x-1)/2$. Because $A$ is sum-dominant,
\begin{align*}
   20-(x-1)/2 \ \ge\ |A+A| \ >\  |A-A| \ =\ 31- 2x.
\end{align*}
We have $x\ge 8$. Hence, $|A-A|\le 31- 2\cdot 8 = 15$. Because $0\in A-A$, the number of distinct positive differences is at most $(15-1)/2 = 7$, as desired. 
\end{proof}
\begin{rek}\normalfont
In Spohn's notation, if we write $A = (0|a_1, a_2,\ldots, a_5)$, then the existence of $m_1$, $m_2$, and $m_3$ as above is equivalent to the existence of $i$, $j$, and $k$ such that $a_i+\cdots+a_j = a_{j+1}+\cdots+a_k$. Equivalently, we have an arithmetic progression of length $3$. 
\end{rek}

\begin{lemma}\label{atmost8}
Let $A$ be a sum-dominant set with $|A| = 6$. Then $A-A$ has at most $8$ distinct positive differences. 
\end{lemma}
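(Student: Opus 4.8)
The plan is to reduce Lemma~\ref{atmost8} to Lemma~\ref{atmost7} by showing that a sum-dominant set $A$ with $|A| = 6$ that has \emph{no} three-term arithmetic progression still cannot have too many distinct positive differences. So first I would dispose of the case covered by Lemma~\ref{atmost7}: if some $m_1, m_2, m_3 \in A$ satisfy $m_2 - m_1 = m_3 - m_2$, then $A$ has at most $7 \le 8$ distinct positive differences and we are done. Hence for the remainder we may assume $A$ contains no three-term arithmetic progression; equivalently, writing $A = (0\mid a_1,\dots,a_5)$, no partial sum $a_i + \cdots + a_j$ equals an adjacent partial sum $a_{j+1} + \cdots + a_k$.

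The main step is to run the same counting argument as in Lemma~\ref{atmost7} but without the "free" pair coming from an AP. Let $x$ be the number of pairs of equal positive differences in $A - A$, so $|A - A| = 31 - 2x$. Each such pair reduces the maximum of $21$ for $|A+A|$ by at least $1/2$ (invoking \cite[Observation~13]{Chu} and the bookkeeping from the previous proof), giving $|A+A| \le 21 - x/2$. Since $A$ is sum-dominant, $21 - x/2 \ge |A+A| > |A-A| = 31 - 2x$, which forces $x \ge 7$ (indeed $x > 20/3$, so $x \ge 7$). Therefore $|A - A| \le 31 - 14 = 17$, and since $0 \in A - A$, the number of distinct positive differences is at most $(17 - 1)/2 = 8$, as desired.

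The delicate point — and the step I expect to be the main obstacle — is justifying the uniform "at least $1/2$ reduction per pair" bound on $|A+A|$ in the absence of the special AP pair; the argument in Lemma~\ref{atmost7} leaned on the AP pair to anchor the estimate, and here one must check that \cite[Observation~13]{Chu} still yields $|A+A| \le 21 - x/2$ when the $x$ coincidences among differences are arbitrary. If the clean bound $21 - x/2$ is not immediately available, a fallback is to split on small values of $x$: for $x \le 6$ one would have $|A - A| = 31 - 2x \ge 19 > 21 \ge |A+A|$ fails to be automatic, so instead one argues directly that few difference-coincidences are incompatible with sum-domination by a short sub-case analysis, while for $x \ge 7$ the displayed inequality already gives at most $8$ distinct positive differences. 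Either way the conclusion is the bound $|A-A| \le 17$.
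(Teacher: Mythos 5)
Your core argument is exactly the paper's proof: set $x$ to be the number of pairs of equal positive differences, use $|A-A| = 31-2x$ and $|A+A| \le 21 - x/2$ (via \cite[Observation 13]{Chu}), and deduce $x \ge 7$ from sum-dominance, hence $|A-A| \le 17$ and at most $8$ positive differences. The preliminary reduction to the case with no three-term arithmetic progression is unnecessary (the paper applies the generic $21 - x/2$ bound without any such hypothesis; the AP pair is only used in Lemma~\ref{atmost7} to sharpen the bound to $20-(x-1)/2$), and your fallback discussion is likewise not needed, but the main line of reasoning is correct and matches the paper.
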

\begin{proof}
Let $x$ be the number of pairs of equal positive differences given by the interaction of numbers in $A$ when we take $A-A$. From the proof of Lemma \ref{atmost7}, we know that $|A+A|\le 21$ and $|A-A| \le 31$. By \cite[Observation 13]{Chu}, we know that $|A-A| = 31 - 2x$, while $|A+A|\le 21- x/2$. Because
$$21-x/2\ \ge\ |A+A| \ >\ |A-A| \ =\ 31-2x,$$
we have $x\ge 7$. Hence, $|A-A|\le 31 - 2\cdot 7 = 17$. Because $0\in A-A$, the number of distinct positive differences is at most $(17-1)/2 = 8$, as desired. 
\end{proof}
\begin{rek}\normalfont
If we have numbers that form an arithmetic progression of length $3$, the upper bound for the number of distinct positive differences is reduced by $1$ (from $8$ to $7$). This is a big advantage in reducing the number of cases as we will utilize this fact later. 
\end{rek}
The following proposition will also be used intensively. 
\begin{proposition}\label{arith4}
Let $|A| = 6$ and $A$ contains an arithmetic progression of length $4$, then $A$ is not sum-dominant. 
\end{proposition}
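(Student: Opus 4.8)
The statement concerns $|A|=6$ with $A$ containing an arithmetic progression of length $4$. An AP of length $4$ already contains an AP of length $3$, so Lemma~\ref{atmost7} applies: $A$ has at most $7$ distinct positive differences. Write $A=(0\,|\,a_1,a_2,a_3,a_4,a_5)$ in Spohn's notation. The plan is to WLOG place the length-$4$ progression among the six elements and exploit the rigidity it forces: after relabeling (and using that $a-A=A$-type symmetry arguments or simple translation/reflection symmetry preserve the sum-dominant property), the four-term AP occupies four of the six positions, so the difference pattern $(a_1,\dots,a_5)$ has a long run of equal consecutive values — say $a_i=a_{i+1}=a_{i+2}=d$ for three consecutive indices, or the AP is split by the two ``extra'' points. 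I would first enumerate the essentially distinct positions the two non-AP elements can occupy relative to the block of four equally spaced points (before the block, after the block, or interleaved), reducing to a small number of configurations.

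**Key steps.** First, reduce to the case where the AP has common difference $1$ by dividing out the gcd (this does not change $|A+A|$ or $|A-A|$), and translate so the AP is $\{0,1,2,3\}$; the two remaining elements are some $u,v\in\mathbb{N}$. Second, for each placement of $u,v$ (e.g. both negative of the block, both above, one on each side, one inside a gap — though ``inside'' forces new coincidences), count the forced pairs of equal positive differences. The AP $\{0,1,2,3\}$ alone contributes several: differences $1$ (three ways: $1-0,2-1,3-2$) and $2$ (two ways: $2-0,3-1$), so just within the block we already have $x\ge 1+1+1 = $ several pairs — carefully, three representations of $1$ give $\binom{3}{2}=3$ pairs and two representations of $2$ give $1$ pair, so $x\ge 4$ from the block alone, independent of $u,v$. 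Third, push this further: any way of adding $u,v$ either creates additional coincidences with the block (if $u$ or $v$ is close to $\{0,1,2,3\}$) or, if $u,v$ are far away, the differences $u-v$, $u-0,u-1,u-2,u-3$, $v-0,\dots,v-3$ are many, and $|A-A|$ grows — but Lemma~\ref{atmost7} caps distinct positive differences at $7$, whereas a set of $6$ generic points has up to $\binom{6}{2}=15$ positive differences, forcing $x\ge 8$. So I would show $x\ge 8$ in every configuration, then re-run the sum-difference inequality from the proof of Lemma~\ref{atmost7}: with the length-$4$ AP we get $|A+A|\le 21-1-1-(x-2)/2$ (the length-$4$ AP gives at least two ``cheap'' pairs that do not help $|A+A|$, by \cite[Observation 13]{Chu}), and comparing with $|A-A|=31-2x$ yields a contradiction with sum-dominance.

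**Main obstacle.** The delicate point is the bookkeeping in \cite[Observation 13]{Chu}: I need to argue that the presence of a four-term AP forces \emph{two} independent pairs of equal positive differences that each cost the sum set nothing (or very little), not just the one pair guaranteed by a three-term AP in Lemma~\ref{atmost7}. Concretely, $\{0,d,2d,3d\}\subseteq A$ gives both $(2d,d)\!=\!(d,0)$ and $(3d,2d)\!=\!(d,0)$ as well as $(3d,d)\!=\!(2d,0)$; I must check via the cited observation that enough of these are ``sum-neutral'' to improve the bound from $|A+A|\le 20-(x-1)/2$ to something like $|A+A|\le 19-(x-2)/2$, which then forces $x$ large enough that $|A-A|<|A+A|$ is impossible. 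If \cite[Observation 13]{Chu} does not directly give two sum-neutral pairs, the fallback is a direct case analysis on the placement of $u,v$: in each of the finitely many cases, explicitly list $A+A$ and $A-A$ (or good lower/upper bounds for their sizes) and verify $|A+A|\le|A-A|$. That case list is short because Lemma~\ref{atmost7} already restricts $u$ and $v$ to lie within a bounded distance of the block (otherwise too many distinct differences appear), so the verification, while tedious, terminates quickly.
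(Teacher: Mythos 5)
The paper does not prove this proposition from scratch: a $6$-element set containing a $4$-term arithmetic progression is exactly an arithmetic progression with two extra elements adjoined, and the paper simply invokes \cite[Theorem 2]{Chu} (adding two arbitrary numbers to an arithmetic progression never yields a sum-dominant set), of which the proposition is an immediate special case. Your proposal instead attempts a self-contained argument; that is a legitimately different route, but its main line does not close.

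The gap is in the concluding inequality. All that the refined bookkeeping of sum-neutral pairs can deliver is a bound of the form $|A+A|\le 20-x/2$: the block $\{0,e,2e,3e\}$ contributes $4$ difference pairs but only $3$ sum coincidences ($0+2e=2e$, $e+3e=4e$, $0+3e=e+2e$, the last one shared by two difference pairs), so $|A+A|\le 21-3-(x-4)/2=20-x/2$. Combining this with $|A-A|=31-2x$ and $|A+A|>|A-A|$ gives only $x\ge 8$, hence at most $7$ distinct positive differences --- exactly what Lemma~\ref{atmost7} already yields, and no contradiction: for every $x\ge 8$ the two bounds remain compatible with sum-dominance (e.g.\ $x=8$ allows $|A+A|\le 16$ against $|A-A|=15$). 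Both bounds decrease in $x$ at rates that never conflict, so no value of $x$ is self-contradictory; the counting can constrain the structure of $A$ but cannot finish the proof. That leaves your ``fallback'' case analysis on the placement of $u$ and $v$ as the entire proof, and it is only sketched; making it rigorous (note also that after rescaling by the common difference $u$ and $v$ need not be integers, so finiteness of the case list must come from the coincidence constraints forced by the $7$-difference cap rather than from integrality) is essentially the content of \cite[Theorem 2]{Chu} and is a substantial piece of casework, not a quick verification.
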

The proof follows immediately from \cite[Theorem 2]{Chu}.
Finally, we present $15$ sets that are not sum-dominant. Most of our cases are reduced to one of these forms. 
\begin{lemma}\label{12Forms}
Let $d$, $a$, and $b$ be positive real numbers. The following sets are not sum-dominant: $S_1  = (0\,|\,d,d,2d,a,b),  \mbox{ with } a+b = d; S_2  = (0\, |\,d, d, 2d, d, a);
S_3  = (0\, |\, d, d, 2d, a, d);$
$S_4  = (0\,|\,2d,d,d,a,2d);
S_5  = (0\, |\, a,b,b,a,a); 
S_6  = (0\, |\, a+b, a, a, b, a+b);
S_7  = (0\, |\, a+b, a, a, b, a);
S_8  = (0\, |\, a,2a,a,a,b);
S_9  = (0\, |\, a+b, a, a+b, a, b);
S_{10}  = (0\, |\, a+b, 2a+b, a+b, a, b);
S_{11}  = (0\, |\, a,b,a,a+b,a);
S_{12}  = (0\, |\, a,b, a+b,a,a);
S_{13}  = (0\, |\, 2a+b, a, a, b, a);
S_{14}  = (0\, |\, a+b, a, a, b, 2a);
S_{15} = (0\,|\, a, a+b, a, b, a).$
\end{lemma}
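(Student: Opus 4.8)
The plan is to verify each of the fifteen sets $S_1,\dots,S_{15}$ individually by exhibiting, for each, enough coincidences among its pairwise sums that the upper bound $|A+A|\le 21$ drops below the trivial bound $|A-A|\le 31$ adjusted for the visible arithmetic progressions. Concretely, for a set $A=(0\,|\,c_1,c_2,c_3,c_4,c_5)$ I would list the six elements $0,\,c_1,\,c_1{+}c_2,\,\dots,\,c_1{+}\cdots{+}c_5$, then compute the multiset of $21$ sums $a_i+a_j$ ($i\le j$) and the multiset of positive differences, and count collisions on each side. Since each of these families is presented with free parameters $d,a,b$, the collisions I rely on must be ones that hold \emph{identically} in the parameters (e.g.\ in $S_5=(0\,|\,a,b,b,a,a)$ the element list is $0,a,a{+}b,a{+}2b,2a{+}2b,3a{+}2b$, which contains the three-term progression $0,a,2a$ shifted, plus the symmetry forcing several sum-coincidences); I would record for each $S_k$ a short table of the form ``these pairs of sums are equal, these pairs of differences are equal,'' and then invoke the arithmetic exactly as in the proofs of Lemmas \ref{atmost7} and \ref{atmost8}: each progression of length $3$ present costs the difference side $2$ and the sum side only $1$, each further sum-collision costs the sum side at least another $1/2$, and once the count of sum-collisions forces $|A+A|\le$ something $\le |A-A|$ we are done for that case. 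Several of the $S_k$ (for instance $S_4$, $S_6$, $S_{15}$) are actually symmetric as sets — symmetric about the sum of their max and min — so for those the one-line argument of the Proposition on symmetric sets applies directly and no counting is needed; I would flag which indices fall under this shortcut first, to shorten the write-up.

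The key steps, in order, are: (1) dispose of the symmetric members of the list by citing the symmetric-set proposition, after checking in Spohn's notation that the difference sequence is a palindrome (a set $(0\,|\,c_1,\dots,c_5)$ is symmetric precisely when $(c_1,\dots,c_5)=(c_5,\dots,c_1)$); (2) for each remaining $S_k$, write out the six-element list explicitly in terms of $d,a,b$; (3) identify all identically-valid sum-coincidences and difference-coincidences and tabulate them; (4) run the Hegarty-style inequality $21 - (\text{sum-deficit}) \ge |A+A| > |A-A| = 31 - 2x$ to reach a contradiction with sum-dominance, or directly show $|A+A|\le|A-A|$. Because the fifteen sets split into a handful of structurally similar shapes — those built on a $d,d,2d$ block ($S_1,S_2,S_3,S_4$), those built on an $a,b$ alternation, and those with an $a{+}b$ pattern — I would group them and present one representative computation per shape in full, then state that the others in the group follow by an identical count, listing only the coincidence tables.

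The main obstacle is bookkeeping rather than any single hard idea: with five free ``gaps'' and three parameters, the element list can degenerate (e.g.\ $a=b$, or $a+b=d$ as is actually imposed in $S_1$) and a coincidence that looks generic may in fact be forced, or conversely a coincidence I want may fail for special parameter values — so I must be careful to use only coincidences that are polynomial identities in $d,a,b$, valid for \emph{all} positive reals, and to handle any genuinely needed subcase (such as $a=b$ inside $S_5$ or $S_8$, where an extra progression of length $3$ or even $4$ appears and Proposition \ref{arith4} kicks in) separately. A secondary subtlety is the half-integer accounting in the sum-side deficit: collisions among sums come in clusters, and a cluster of $k$ mutually-equal sums removes $k-1$ from $|A+A|$, so when several of the tabulated sum-equalities chain together I must compute the deficit from the partition into equal-value classes, not by naively adding $1/2$ per listed pair. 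Once the tables are correct, each verification is a few lines, and the lemma follows by exhausting $k=1,\dots,15$.
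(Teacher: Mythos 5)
Your plan has two genuine problems. First, the symmetric-set shortcut you lead with is based on a false premise: none of $S_4$, $S_6$, $S_{15}$ is symmetric for generic parameters. In Spohn's notation symmetry means the gap sequence is a palindrome, and $S_6=(0\,|\,a+b,a,a,b,a+b)$ reversed is $(0\,|\,a+b,b,a,a,a+b)$, which agrees with $S_6$ only when $a=b$; similarly $S_4$ is palindromic only when $a=d$ and $S_{15}$ never (it would force $b=a+b$). Concretely, $S_6$ has elements $0,a+b,2a+b,3a+b,3a+2b,4a+3b$, and $4a+3b-(2a+b)=2a+2b$ is not an element in general. So step (1) disposes of nothing, and in fact none of the fifteen families is symmetric; each needs an actual count. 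Second, and more seriously, the inequality you propose to run in step (4) points the wrong way. To show a \emph{specific} family is not sum-dominant you need an upper bound on $|A+A|$ together with a \emph{lower} bound on $|A-A|$. Tabulating only the coincidences that hold identically in $d,a,b$ gives you a lower bound on the number $x$ of difference-collisions, hence only an \emph{upper} bound $|A-A|\le 31-2x_{\mathrm{generic}}$; combined with $|A+A|\le 21-(\text{sum-deficit})$ you have two upper bounds and no contradiction with $|A+A|>|A-A|$. The chain $21-x/2\ge|A+A|>|A-A|=31-2x$ from Lemmas \ref{atmost7} and \ref{atmost8} yields non-sum-dominance only when $x\le 6$, and for these highly structured sets $x$ is typically larger than that, so the Hegarty-style accounting cannot finish the job by itself.

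The paper closes exactly this gap by working incrementally: for each $S_k$ it strips off the largest one or two elements, records the exact value of $|K-K|-|K+K|$ for the remaining four- or five-element set $K$, and then shows that reintroducing each stripped element creates at least as many new differences as new sums --- the new differences are certified as genuinely new by explicit size comparisons (e.g.\ they exceed the previous maximum difference, or lie strictly between known consecutive differences), which is precisely the lower-bound information your generic-coincidence tables do not provide. The parameter degenerations you worry about (such as $a=d$, $a=2d$, $a=3d$ in $S_3$, or the finitely many membership conditions like $2a\in D_5$ for $S_5$) are handled there as short separate checks. If you want to salvage your approach, you would have to replace step (4) by this kind of two-sided count --- exhibiting a sufficiently long list of provably distinct differences for every admissible $(d,a,b)$, with a finite case split over the degenerate parameter relations --- at which point you have essentially reconstructed the paper's argument without the misleading detour through symmetry.
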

Note that we use $0$ as the minimum element, but the minimum can be any number since sum-dominance is preserved under affine transformations. Because the proof is tedious and is not the main focus of this paper, we move the proof to Section \ref{appen}.

\section{When $a_1 = a_2$}\label{12} Because $a_1 = a_2$, Lemma \ref{atmost7} says that $|A-A|\le 7$. 
If $a_1 = a_3$, then $a_1 = a_2 = a_3$, and we have an arithmetic progression of length $4$. By Proposition \ref{arith4}, we do not have a sum-dominant set. We consider $a_1\neq a_3$. 
\begin{center}\textbf{Part I. $a_3 = a_1+a_2$}\end{center}

Our distinct positive differences include 
$$a_1 \ <\ a_1+a_2\ <\ a_2+a_3 \ <\ a_1+a_2+a_3 \ < \ a_1+\cdots+a_4 \ <\ a_1+\cdots+a_5.$$
We are allowed to have at most one more positive difference. Let $a_1 = a_2 =d$. It follows that $a_3 = 2d$. Consider two cases.

\noindent \textbf{Case I:} $a_4 = d$. We have $S_2$, which is not sum-dominant. 

\noindent \textbf{Case II:} $a_4 \neq d$. Then the difference $a_2+a_3 +a_4 = 3d+a_4$ is another positive difference, meaning that all other differences must be equal to one of the following $7$ positive differences. 
    $$d \ < \ 2d \ <\ 3d \ < \ 4d \ < \ 4d+a_4 \ <\ 4d+a_4+a_5\mbox{, } 3d+a_4.$$
Indeed, $3d+a_4$ is a new difference  because $3d<3d+a_4<4d+a_4$ but $3d+a_4\neq 4d$. Consider the difference $a_2+a_3+a_4+a_5 = 3d +a_4+ a_5$.
Either $3d + a_4+a_5 = 4d$ or $3d+a_4+a_5 = 4d+a_4$. The former gives $a_4+a_5 = d$, while the latter gives $a_5=d$. None of these produces a sum-dominant set because neither $S_1$ nor $S_3$ is sum-dominant. 
\begin{center}\textbf{Part II. $a_3 \neq a_1+a_2$}\end{center}
Because $a_2+a_3>a_1$ and $a_1\neq a_3$, we have the following list of $7$ distinct differences
\begin{align*}
&a_1 \ < \ a_1+a_2 \ < \ a_1+a_2+a_3 \ < \ a_1+\cdots+a_4 \ <\ a_1+\cdots+a_5,\\
&a_3 \ < \ a_2+a_3.
\end{align*}
Consider the difference $a_2+a_3+a_4$. Either $a_2+a_3+a_4 = a_1+a_2+a_3$ or $a_2+a_3+a_4 = a_1+a_2$. The former gives $a_1 = a_4$, while the later gives $a_1 = a_3 + a_4$. 

\noindent \textbf{Case I:} $a_1 = a_4$. Then $a_2+\cdots+a_5 = a_1+\cdots+a_4$, so $a_1 = a_5$. Because $a_1= a_2=a_4=a_5$, we have a symmetric set, which is not sum-dominant. 

\noindent \textbf{Case II:} $a_1 = a_3 + a_4$. Because $a_1 = a_2 = a_3 + a_4$, we have an arithmetic progression of length $4$. By Proposition \ref{arith4}, we do not have a sum-dominant set. 

\section{When $a_1\neq a_2$}\label{1not2}
The following are distinct positive differences

\begin{align}&a_1 \ <\ a_1+a_2 \ < \ a_1+a_2+a_3 \ < \ a_1+\cdots+a_4 \ < \ a_1+\cdots+a_5,\nonumber\\ 
&a_2.\label{whena1nota2}
\end{align}
\begin{center}
\textbf{Part I. $a_2+a_3 = a_1$} 
\end{center}
Because $a_1=a_2+a_3$, we know, by Lemma \ref{atmost7}, that the number of positive differences is at most $7$. Hence, we are allowed to have at most one more positive difference. 
We consider $a_2+a_3+a_4$. 

\noindent \textbf{Case I:} $a_2+a_3+a_4 = a_1+a_2+a_3$. So, $a_1 = a_4$. Because $a_1 = a_2+a_3 = a_4$, we have an arithmetic progression of length $4$. By Proposition \ref{arith4}, we do not have a sum-dominant set. 

\noindent \textbf{Case II:} $a_2 + a_3 + a_4 = a_1 + a_2$. So, $a_1 = a_3 + a_4$. Since $a_1 = a_2 + a_3$, we have $a_2 = a_4$. 
    \begin{enumerate}
        \item Subcase II.1: $a_2 = a_3$. Since $a_2 = a_3 = a_4$, we have an arithmetic progression of length $4$. By Proposition \ref{arith4}, we do not have a sum-dominant set.
        \item Subcase II.2: $a_2 \neq a_3$. Our $7$ distinct positive differences are
        \begin{align*}&a_1 \ < \ a_1+a_2 \ < \ a_1+a_2+a_3 \ <\ a_1+\cdots+a_4 \ <\ a_1+\cdots+a_5, \\
&a_2\mbox{, }a_3.
\end{align*}
(Note that $a_3<a_1$ since $a_1 = a_3+a_4$.) Because we cannot have a new difference besides these $7$ differences, either $a_2+\cdots+a_5 = a_1+a_2+a_3$ or $a_2+\cdots+a_5 = a_1+\cdots+a_4$ because $a_2+\cdots+a_5 = 2a_2 + a_3 + a_5 > a_1 + a_2$. 
\begin{itemize}
    \item If the former, we have $a_1 = a_4+a_5$. Because $a_1 = a_2+a_3 = a_4 + a_5$, we have an arithmetic progression of length $4$ and thus, do not have a sum-dominant set.  
    \item If the latter, we have $a_1 = a_5$. Because $a_2 = a_4$, we have a symmetric set, which is not sum-dominant. 
\end{itemize}

    \end{enumerate}
\noindent \textbf{Case III:} $a_2+a_3+a_4$ is not equal to any difference in our List (\ref{whena1nota2}). By adding $a_2+a_3+a_4$ to our list, we have $7$ distinct positive differences and this new list is exhaustive. Consider the difference $a_3$. It must be that $a_3 = a_2$. Consider $a_2+\cdots+a_5$. 
\begin{enumerate}
    \item Subcase III.1: $a_2+\cdots+a_5 = a_1+\cdots+a_4$. Equivalently, $a_1= a_5$. Let $a_2=a_3 = d$. It follows that $a_5 = a_1 = a_2+a_3 =2d$. Our set is of the form $(0|2d, d, d, a_4, 2d)$, which is $S_4$, not a sum-dominant set. 
    \item Subcase III.2: $a_2+\cdots+a_5 = a_1+a_2+a_3$. Equivalently, $a_1 = a_4+a_5$. The fact that $a_1 = a_2+a_3 = a_4+a_5$ gives us an arithmetic progression of length $4$. Hence, our set is not sum-dominant. 
    \item Subcase III.3: $a_2 +\cdots+ a_5 = a_1+a_2$. Equivalently, $a_1 = a_3+a_4+a_5$. So, $a_4 + a_5 = a_1 -a_3 =a_2$. The fact that $a_2 = a_3 = a_4+a_5$ gives us an arithmetic progression of length $4$. Hence, our set is not sum-dominant. 
\end{enumerate}
\begin{center}
    \textbf{Part II. $a_2+a_3\neq a_1$}
\end{center}

\noindent \textbf{Case I:} $a_2+a_3 = a_1+a_2$. Equivalently, $a_1= a_3$. There are two possibilities for $a_2+a_3+a_4$ because $a_1 + \cdots + a_4 > a_2 + a_3 + a_4 > a_1 +a_2$. 
\begin{enumerate}
    \item Subcase I.1: $a_2+a_3+a_4 = a_1+a_2+a_3$. Equivalently, $a_1 = a_4$. Because $a_1 = a_3$, we know that $a_3 = a_4$. We thus have at most $7$ distinct positive differences. Consider the difference $a_2+\cdots+a_5$. We know that either $a_2 + \cdots +a_5 = a_1+\cdots +a_4$ or $a_2+\cdots+a_5$ is a new difference. If the former, we have $a_1 = a_5$, which implies that $a_3 = a_4 = a_5$, giving us an arithmetic progression of length $4$. Hence, we do not have a sum-dominant set. We consider the case where $a_2+\cdots+a_5$ is a new difference. All of the positive differences are
     \begin{align*}&a_1\ < \ a_1+a_2 \ <\ a_1+a_2+a_3 \ < \ a_1+\cdots+a_4 \ < \ a_1+\cdots+a_5,\\
&a_2 \ < \ a_2+\cdots+a_5.
\end{align*}
Consider $a_3 + a_4$. The only two possible values for $a_3+a_4$ are $a_1+a_2$ and $a_2$. If the former, we have $a_1 = a_2 = a_3 = a_4$, which does not give a sum-dominant set by \cite[Lemma 8]{Chu}. If the latter, we have $a_3+a_4 = a_2$, which gives us $S_8$, not a sum-dominant set. 
    \item Subcase I.2: $a_2+a_3+a_4$ is a new difference. Our set of positive differences contains
\begin{align*}&a_1 \ <\ a_1+a_2 \ < \ a_1+a_2+a_3 \ <\ a_1+\cdots+a_4 \ < \ a_1+\cdots+a_5,\\
&a_2\ < \ a_2+a_3+a_4.
\end{align*}
We consider three possibilities for $a_2+\cdots+a_5$. 
\begin{itemize}
    \item Subcase I.1.1: $a_2 + \cdots+a_5 = a_1 + a_2+a_3$. Equivalently, $a_1 = a_4 + a_5$. Because $a_1 = a_3$, we have $a_3 = a_4 + a_5$. Hence, the above list of differences is exhaustive. Consider $a_3 + a_4$. Either $a_3 + a_4 = a_1+a_2$ or $a_3 +a_4 = a_2$. Neither of these is a sum-dominant set because neither $S_9$ nor $S_{10}$ is sum-dominant. 
    \item Subcase I.1.2: $a_2+\cdots + a_5 = a_1 + \cdots + a_4$. Equivalently, $a_1 = a_5$. Consider $a_3 + a_4$. There are four possibilities. 
    
    If $a_3 +a_4 = a_2$, we arrive at $S_{15}$.
    
    If $a_3+a_4 = a_1+a_2$, we have $a_2 = a_4$ and thus, a symmetric set, which is not sum-dominant. 
    
    If $a_3 + a_4 = a_1 + a_2 + a_3$, then $a_2+a_3 = a_4$ because $a_1 = a_3$. We arrive at $S_{11}$.
    
    If $a_3 + a_4$ is a new difference, then we have exactly $8$ distinct differences by Lemma \ref{atmost8}. Consider $a_3 + a_4 + a_5$. To have $8$ distinct differences, the only possibility is that $a_3 + a_4 + a_5 = a_1 + a_2 + a_3$; equivalently, $a_1+a_2 = a_4+a_5$. Because $a_1 = a_5$, it follows that $a_2= a_4$. So, we have a symmetric set, which is not sum-dominant.  
    
    \item Subcase I.1.3: $a_2 + \cdots + a_5$ is a new difference. Consider $a_3 + a_4$. Note that $a_3+a_4\notin \{a_2, a_1+a_2\}$ because we have $8$ distinct positive differences. Indeed, the only possibility is that $a_3+a_4 = a_1+a_2+a_3$. So, $a_1+a_2 = a_4$. However, because $a_1 = a_3$, we have $a_2+a_3 = a_4$, which contradicts that $A-A$ has $8$ positive differences. 
\end{itemize}
\end{enumerate}
\noindent \textbf{Case II:} $a_2+a_3 \neq   a_1+a_2$. Equivalently, $a_1\neq a_3$. The following are distinct positive differences
\begin{align*}&a_1 \ <\ a_1+a_2 \ <\ a_1+a_2+a_3\ <\ a_1+\cdots+a_4\ <\ a_1+\cdots+a_5,\\
&a_2\ <\ a_2+a_3.
\end{align*}

\begin{enumerate}
    \item Subcase II.1: $a_3 = a_1+a_2$. The above list contains all positive differences. It must be that $a_2+a_3+a_4 = a_1+a_2+a_3$. So, $a_1 = a_4$. We also have $a_2+\cdots+a_5 = a_1 + \cdots+a_4$. Hence, $a_1 = a_5$. We arrive at $S_{12}$, which is not sum-dominant. 
    \item Subcase II.2: $a_3 = a_2$. The above list contains all positive differences. There are three possibilities for $a_2+a_3+a_4$.
    \begin{itemize}
        \item Subcase II.2.1: $a_2+a_3+a_4 = a_1+a_2+a_3$. Equivalently, $a_1=a_4$. It follows that $a_2+\cdots+a_5 = a_1+\cdots+a_4$. So, $a_1 = a_5$. We arrive at $S_5$. 
        \item Subcase II.2.2: $a_2+a_3+a_4 = a_1+a_2$. So, $a_1 = a_3 + a_4$. The difference $a_2+\cdots+a_5$ is either equal to $a_1+a_2+a_3$ or $a_1+\cdots+a_4$. If the former, we obtain $a_1 = a_4+a_5$ and arrive at $S_7$. If the latter, we obtain $a_1=a_5$ and arrive at $S_6$. 
        \item Subcase II.2.3: $a_2+a_3+a_4 = a_1$. There are three possibilities for $a_2+\cdots+a_5$.
        
        If $a_2 + \cdots +a_5 = a_1+a_2$, then we have $a_1 = a_3 + a_4 + a_5$. We arrive at $S_{13}$.
        
        If $a_2 + \cdots + a_5 = a_1 + a_2 + a_3$, then we have $a_1 = a_4 + a_5$. We arrive at $S_{14}$.
        
        If $a_2 + \cdots + a_5 = a_1+\cdots+a_4$, then $a_1 = a_5$. So, we have $a_1 = a_2 + a_3 + a_4 = a_5$ and thus, have an arithmetic progression of length $4$. Our set is not sum-dominant.
    \end{itemize}
    \item Subcase II.3: $a_3$ is a new difference. The exhaustive list of positive differences is 
    \begin{align*}&a_1\mbox{, }a_1+a_2\mbox{, }a_1+a_2+a_3\mbox{, }a_1+\cdots+a_4\mbox{, }a_1+\cdots+a_5\mbox{, }\\
&a_2\mbox{, }a_2+a_3,\\&a_3. 
\end{align*}
There are three possibilities for $a_2+a_3+a_4$. We analyze each possibility.
\begin{itemize}
    \item Subcase II.3.1: $a_2+a_3+a_4=a_1$. Then we can have at most $7$ positive differences, which is a contradiction.  
    \item Subcase II.3.2: $a_2+a_3+a_4 = a_1+a_2$. So, $a_1 = a_3+a_4$. There are two possibilities for $a_2+\cdots+a_5$. 
    
    If $a_2+\cdots+a_5 = a_1+\cdots+a_4$, then $a_1 = a_5$, implying that $a_5 = a_3+a_4$. Then we have at most $7$ positive differences, a contradiction. 
    
    If $a_2+\cdots+a_5 = a_1+a_2+a_3$, then $a_1 = a_4+a_5$. Consider $a_4$. There are three possibilities for $a_4$. 
    If $a_4 = a_2$, then because $a_1 = a_3+a_4$, we have $a_1 = a_2+a_3$. So, we have at most $7$ positive differences, a contradiction.
    If $a_4 = a_3$ or $a_4 = a_2+a_3$, we again have at most $7$ positive differences, a contradiction. 
    \item Subcase II.3.3: $a_2+a_3+a_4=a_1+a_2+a_3$. Equivalently, $a_1=a_4$. It follows that $a_2+a_3+a_4+a_5 = a_1+a_2+a_3+a_4$. Equivalently, $a_1=a_5$. So, $a_4=a_5$, implying that we have at most $7$ distinct differences, a contradiction. 
\end{itemize}
\end{enumerate}
\section{Proof of Lemma \ref{12Forms}}\label{appen}

We first prove that $S_1$ is not sum-dominant. Note that $(0\,|\,d, d, 2d)$ represents the set $K = \{0, d, 2d, 4d\}$ and $|K-K| - |K+K| = 1$. In particular, 
\begin{align*}K+K \ &=\ \{0, d, 2d, 3d, 4d, 5d, 6d, 8d\},\\
K-K\ &=\ \{0, \pm d, \pm 2d, \pm 3d, \pm 4d\}.\end{align*}
With $\{4d+a\}\rightarrow K$, we have at most $5$ new sums. However, the set of new positive differences is $\{a, a+2d, a+3d, a+4d\}$. (These are new differences because $0<a<d$.) Denote $\{4d+a\}\cup K = K_1$. Then $|K_1-K_1| - |K_1+K_1| \ge (|K-K| - |K+K|)+(2\cdot 4-5) = 4$. Finally, $\{4d+a+b\}\rightarrow K_1$ gives at least one new positive difference, which is $4d+a+b$ itself while gives at most $6$ new sums. Hence, $|S_1-S_1| - |S_1+S_1|\ge (|K_1-K_1| -|K_1+K_1|)+(2-6)\ge 4+2-6 = 0$. Hence, $S_1$ is not sum-dominant.

Next, we prove that $S_2$ is not sum-dominant. If $a=d$ or $a= 2d$, it is an easy check that $S_2$ is not sum-dominant. Because the set $(0|d, d, 2d, d)$ is not sum-dominant, it suffices to show that adding $5d+a$ to the set gives at least as many differences as sums.  We proceed by considering two cases. 
\begin{itemize}
\item Case 1: $a<d$ or $d<a<2d$. The set of new positive differences is $\{a, a+d, a+3d, a+4d, a+5d\}$, while there are at most $6$ new sums. We are done. 
\item Case 2: $a>2d$. The following are new differences $a+3d, a+4d, a+5d$ because they are all greater than $5d$. Hence, the number of new differences is at least $6$, while there are at most $6$ new sums. We are done. 
\end{itemize}
We have shown that $S_2$ is not sum-dominant. 

We prove that $S_3$ is not sum-dominant. It is easily checked that if $a=d$ or $a=2d$, we do not have a sum-dominant set. We proceed by considering three cases. 
\begin{itemize}
    \item Case 1: $a<d$. The proof follows exactly the proof that $S_1$ is not sum-dominant.
    \item Case 2: $d<a<2d$. We have $\{a+4d, a+5d\}\rightarrow K$ gives at most 11 new sums. Because $|K-K|-|K+K|=1$, it suffices to show that there are at least $5$ new positive differences. Indeed, new differences include $a+d$, $a+2d$, $a+3d$, $a+4d$, and $a+5d$. We are done. 
    \item Case 3: $a>2d$. We have $\{a+4d, a+5d\}\rightarrow K$ gives at most 11 new sums. Because $|K-K|-|K+K|=1$, it suffices to show that there are at least $5$ new positive differences. New differences include $a+2d$, $a+3d$, $a+4d$, and $a+5d$ because each of these is greater than $4d$. 
    If $a+d\neq 4d$, we have a new difference and we are done. If $a+d = 4d$, then $a=3d$. It can be checked that $S_3$ is not sum-dominant. 
\end{itemize}
Therefore, $S_3$ is not sum-dominant. 

Let $K_4 = (0|2d,d, d) = \{0,2d,3d,4d\}$. It is easy to check that $|K_4-K_4|-|K_4+K_4| = 3$. If $a = d$ or $a=2d$, it is also easily checked that $S_4$ is not sum-dominant. Because $\{4d+a, 6d+a\}\rightarrow K_4$ gives at most $11$ new sums. It suffices to show that the number of new differences is at least $8$. We consider two following cases.
\begin{itemize}
    \item Case 1: $a<d$ or $d<a<2d$. The set of new positive differences includes $a$, $a+d$, $a+2d$, $a+3d$, and $a+4d$. We are done. 
    \item Case 2: $a>2d$. The set of new positive differences includes $a+2d$, $a+3d$, $a+4d$, and $a+6d$. We are done. 
\end{itemize}
Hence, $S_4$ is not sum-dominant. 

We prove that $S_5$ is not sum-dominant. Denote $K_5 = S_5\backslash \{3a+2b\}$. The set of all possible differences in $K_5-K_5$ is $D_5 = \{a,a+b,a+2b,2a+2b,b,2b\}$. It is an easy check that if either $2a\in D_5$ or $2a+b\in D_5$, then we do not have a sum-dominant set. To illustrate, we give an example. Suppose that $2a = a+2b$. Equivalently, $a=2b$. We have $S_5 = (0\,|\,2b,b,b,2b,2b) = \{0,2b,3b,4b,6b,8b\}$. Because $\{0,2,3,4,6,8\}$ is not sum-dominant, $S_5$ is not sum-dominant. Suppose that $\{2a, 2a+b\}\cap D_5=\emptyset$. Adding $3a+2b$ to $K_5$ gives us three new positive differences $\{2a,2a+b,3a+2b\}$. Because the number of new sums is at most $6$, we know that $S_5$ is not sum-dominant. 

We prove $S_6$ is not sum-dominant. Denote $K_6 = S_6\backslash \{4a+3b\}$. The set of all possible differences in $K_6-K_6$ is $D_6 = \{a+b, 2a+b, 3a+b, 3a+2b,a,2a,b\}$. It is an easy check that if $\{a+2b, 2a+2b\}\cap D_6\neq \emptyset$, we do not have a sum-dominant set. Suppose that $\{a+2b,2a+2b\}\cap D_6 = \emptyset$. Adding $4a+3b$ to $K_6$ gives us three new positive differences $\{a+2b,2a+2b,4a+3b\}$ while at most $6$ new sums. Hence, $S_6$ is not sum-dominant. 

We prove $S_7$ is not sum-dominant. Denote $K_7 = S_7\backslash \{4a+2b\}$. Adding $4a+2b$ to $K_7$ gives us at most two possible new sums $\{7a+3b, 7a+4b\}$. Because $4a+2b$ is a new difference, $S_7$ is not sum-dominant. 

We prove that $S_8$ is not sum-dominant. Denote $K_8 = S_8\backslash \{4a+b\}$. We have $K_8 - K_8 = D_8 = \{0,a,2a,3a,4a,5a\}$. If $\{b,a+b,2a+b\}\cap D_8 \neq \emptyset$, $S_8$ is not sum-dominant. If otherwise, $4a+b\rightarrow K_8$ gives at least $3$ new positive differences while at most $6$ new sums. Hence, $S_8$ is not sum-dominant. 

The proof that $S_i$ for $9\le i\le 15$ are not sum-dominant is similar to the proof that $S_5$ is not sum-dominant. So, we omit the proof. 
\section{The smallest sum-dominant set of primes}\label{prime}
The Green-Tao theorem guarantees that there are infinitely many sum-dominant sets of primes; that is, sum-dominant sets can be constructed using long arithmetic progressions of primes. However, sum-dominant sets are expected to appear much earlier in the prime sequence. Chu et al. constructed the set $P=\{19, 79, 109, 139, 229, 349, 379, 439\}$ using the Hardy-Littlewood $k$-tuple conjecture \cite{CNMXZ}. We summarize the idea of the construction below.

An $m$-tuple $(b_1,b_2,\ldots,b_m)$ is said to be admissible if for all integers $k\ge 2$, $\{b_1,b_2,\ldots, b_m\}$ does not cover all values modulo $k$. Clearly, we only need to check for all values of $k$ between $2$ and $m$. An integer $n$ matches the tuple if $b_1+n,b_2+n,\ldots, b_m+n$ are all primes. The Hardy-Littlewood conjecture implies that every admissible $m$-tuple is matched by infinitely many integers. 

We apply this construction to $A_{8}$ and $A_{11}$ \cite{He} to find new sum-dominant sets that appear earlier in the prime sequence. In particular, 
$$12A_{8} = \{0,24,48,96,108,120,180,204,228\}$$ is an admissible $9$-tuple. A quick search shows that $$A'_8=103+12A_8  = \{103,127,151,199,211,223,283,307,331\}$$ is a set of primes. Because sum-dominance is preserved under affine transformation, $103+12A_8$ is also sum-dominant. Similarly, $$A'_{11} = 23+6A_{11} = \{23,47,59,71,89,107,137,149,173\}$$ is sum-dominant. Both $A'_{8}$ and $A'_{11}$ are smaller than the previous set $P$ in terms of the largest element. 

We can do better with computers' help. We run an algorithm to find all sum-dominant subsets of $\{3,5,\ldots,109\}$ (all primes from $3$ to $109$). We find $2725$ sets with $$\{3, 5, 7, 13, 17, 19, 23, 43, 47, 53, 59, 61, 67, 71, 73\}$$ being the uniquely smallest. We exclude $2$ from our original set of primes because if a set $S$ of primes containing $2$ is sum-dominant, then $S\backslash \{2\}$ is also sum-dominant. (The reason is that adding $2$ to a set of odd primes gives at least $7$ more differences than sums.) This reduces our running time by a half. Because all of our $2725$ sets have their sum sets be larger than their difference sets by at most $4$, adding $2$ to any of these sets does not give a sum-dominant set. 
\section{Future work}\label{future}
We end with two questions for future research.
\begin{itemize}
    \item Is there a human-understandable proof that a set of cardinality $7$ is not sum-dominant?
    Let $A$ be a set of cardinality $7$. Then $|A+A|\le 7\cdot 8/2 = 28$, while $|A-A|\le 7\cdot 6+1 = 43$. Using the same argument in the proof of Lemma \ref{atmost8}, we know that $A-A$ has at least $11$ pairs of equal positive differences. Hence, $A-A$ has at most $21$ distinct differences; equivalently, $A-A$ has at most $10$ distinct positive differences. This bound is not good enough and requires us to consider a lot more cases than when we have only $8$ distinct positive differences. Hence, it is unknown whether a human-understandable proof can be written down in full. 
    \item What is the minimum number of elements to be added to an arithmetic progression to form a sum-dominant set? 
\end{itemize}

\end{document}